\newcommand*\circled[1]{\tikz[baseline=(char.base)]{
    \node[shape=circle, draw, inner sep=0pt, 
    minimum height={\f@size},] (char) {\vphantom{WAH1g}#1};}}
\newcommand{\operp}{\text{\circled{$\perp$}}}
\DeclareRobustCommand\widecheck[1]{{\mathpalette\@widecheck{#1}}}
\def\@widecheck#1#2{%
    \setbox\z@\hbox{\m@th$#1#2$}%
    \setbox\tw@\hbox{\m@th$#1%
       \widehat{%
          \vrule\@width\z@\@height\ht\z@
          \vrule\@height\z@\@width\wd\z@}$}%
    \dp\tw@-\ht\z@
    \@tempdima\ht\z@ \advance\@tempdima2\ht\tw@ \divide\@tempdima\thr@@
    \setbox\tw@\hbox{%
       \raise\@tempdima\hbox{\scalebox{1}[-1]{\lower\@tempdima\box
\tw@}}}%
    {\ooalign{\box\tw@ \cr \box\z@}}}
\newcommand\nthalias[1]{\AddToHook{env/#1/begin}{\crefalias{lemma}{#1}}}
\crefname{section}{Section}{Sections}
\crefname{subsection}{\S}{\S\S}
\crefname{subsubsection}{\S}{\S\S}
\theoremstyle{plain}
\newtheorem{lemma}{Lemma}[section]
\newtheorem{proposition}[lemma]{Proposition}
\newtheorem{corollary}[lemma]{Corollary}
\newtheorem{theorem}[lemma]{Theorem}
\theoremstyle{plain}
\theoremstyle{plain}
\newtheorem{definition}[lemma]{Definition}
\newtheorem{remark}[lemma]{Remark}
\newtheorem{remarks}[lemma]{Remarks}
\newtheorem{notation}[lemma]{Notation}
\crefname{definition}{definition}{definitions}
\crefname{ex}{example}{examples}
\crefname{exs}{example}{examples}
\crefname{remark}{remark}{remarks}
\crefname{remarks}{remark}{remarks}
\crefname{convention}{convention}{conventions}
\crefname{notation}{notation}{notations}
\crefname{table}{table}{tables}
\crefname{lemma}{lemma}{lemmas}
\crefname{proposition}{proposition}{propositions}
\crefname{propositionN}{proposition}{propositions}
\crefname{corollary}{corollary}{corollaries}
\crefname{corollaryN}{corollary}{corollaries}
\crefname{theorem}{theorem}{theorems}
\crefname{theoremN}{theorem}{theorems}
\crefname{enumi}{}{}
\crefname{assumption}{assumption}{Assumptions}
\crefname{construction}{construction}{Constructions}
\crefname{sketch}{sketch}{Sketches}
\crefname{question}{question}{Questions}
\crefname{equation}{}{}
\numberwithin{equation}{section}
\theoremstyle{nonumberplain}
\newtheorem{proof}{Proof}
\newcommand\pf[1]{\newtheorem{#1}{Proof of \Cref{#1}}}
\newcommand\bC{{\mathbb C}}
\newcommand\bF{{\mathbb F}}
\newcommand\bG{{\mathbb G}}
\newcommand\bP{{\mathbb P}}
\newcommand\bR{{\mathbb R}}
\newcommand\bS{{\mathbb S}}
\newcommand\bZ{{\mathbb Z}}
\newcommand\cC{{\mathcal C}}
\newcommand\cH{{\mathcal H}}
\newcommand\cK{{\mathcal K}}
\newcommand\cM{{\mathcal M}}
\newcommand\cV{{\mathcal V}}
\newcommand\cW{{\mathcal W}}
\newcommand\wh{\widehat}
\DeclareMathOperator{\Ad}{Ad}
\DeclareMathOperator{\id}{id}
\DeclareMathOperator{\GL}{GL}
\DeclareMathOperator{\SU}{SU}
\DeclareMathOperator{\G}{G}
\DeclareMathOperator{\orth}{O}
\DeclareMathOperator{\U}{U}
\newcommand{\qedhere}{\mbox{}\hfill\ensuremath{\blacksquare}}
\newcommand{\crc}[1]{\overset{\circ}{#1}}
\title{The combinatorics of permuting and preserving curve-bound spectra}
\author{Alexandru Chirvasitu}
\begin{document}

\date{}

\newcommand{\Addresses}{{
  \bigskip
  \footnotesize

  \textsc{Department of Mathematics, University at Buffalo}
  \par\nopagebreak
  \textsc{Buffalo, NY 14260-2900, USA}  
  \par\nopagebreak
  \textit{E-mail address}: \texttt{achirvas@buffalo.edu}

}}

\maketitle

\begin{abstract}
  We prove that continuous spectrum- and commutativity-preserving maps to $\mathcal{M}_n(\mathbb{C})$ from the space of normal (real or complex) $n\times n$, $n\ge 3$ matrices with spectra contained in a given continuous-injection interval image $\Lambda\subseteq \mathbb{C}$ or $\mathbb{R}$ are (a) conjugations; (b) transpose conjugations, or (c) orderings of spectra according to an orientation of $\Lambda$, with fixed eigenspaces. This generalizes results of Petek's (self-maps of real or complex Hermitian matrices) and the author's (complex Hermitian matrices as the domain, $\mathcal{M}_n(\mathbb{C})$ as the codomain). An application rules out possibility (c) for normal matrices with spectra constrained to a simple closed curve, extending a result by the author, Gogi\'c and Toma\v{s}evi\'c to the effect that continuous commutativity and spectrum preservers on unitary groups are (transpose) conjugations.

  The involution preserving eigenspaces and complex-conjugating eigenvalues is a novel possibility beyond (a), (b) and (c) if the domain consists of all semisimple operators with $\Lambda$-bound spectra instead; its continuity (or lack thereof) and whether or not that map furthermore extends continuously to arbitrary $\Lambda$-constrained-spectrum matrices hinge on the geometry and regularity of $\Lambda$.
\end{abstract}

\noindent \emph{Key words:
  Grassmannian;
  Hermitian;
  normal operator;
  semisimple operator;
  simple curve;
  simple spectrum;
  spectrum preserver;
  symmetric group
}

\vspace{.5cm}

\noindent{MSC 2020: 47A10; 15B57; 54D05; 15A27; 46C05; 20B30; 54H15; 54F50
  
}


\section*{Introduction}

The problem of whether a linear spectrum-preserving map of a complex semisimple Banach algebra onto another is necessarily a \emph{Jordan morphism} \cite[\S 2.1.2]{ho-s_jord} was posed in \cite[\S 1]{Aupetit}, as a variant of a discussion in \cite[\S 9]{Kaplansky}; it has since spanned a considerable body of work on characterizing maps between various types of operator algebras (much of it in the context of matrix algebras) subject to various spectrum and algebraic-structure preservation constraints: \cite{MR2430550,MR1423038,MR1653251,MR2115007,MR4927632,GogicPetekTomasevic,MR4830482,Petek-TM,zbMATH01100760,PetekSemrl} and their own references will provide an ample overview. 

One specific result we will revisit below is \cite[Main theorem]{Petek-HM}, classifying spectrum- and commutativity-preserving continuous self-maps of the space $\cH_n(\Bbbk)$ of Hermitian $n\times n$ matrices over $\Bbbk\in \left\{\bR,\bC\right\}$: they are precisely
\begin{itemize}
\item the conjugations (necessarily by unitary/orthogonal operators);
\item the transpose conjugations;

\item or, up to conjugation,
  \begin{equation*}
    X
    \xmapsto{\quad}
    \mathrm{diag}\left(\lambda_1(X)\le \cdots\le \lambda_n(X)\right)
    \quad
    \left(\text{ordered spectrum of $X$}\right).
  \end{equation*}
\end{itemize}
The complex version is strengthened slightly in \cite[Theorem B]{2505.19393v3} by extending the codomain to $\cM_n(\bC)$ as a consequence of an analogue \cite[Theorem A]{2505.19393v3} for commutativity/spectrum preservers defined on the special unitary group $\SU(n)$ and combinatorial considerations pertaining to symmetric (or more generally, \emph{Coxeter} \cite[\S 5.1]{hmph_cox}) groups. 

The present paper proposes to distill the combinatorial core common to the results just mentioned, formalizing the conditions affording the type of spectrum-ordering-based arguments in evidence therein. The remainder of the current section elaborates. 

We will be working with subsets of $\cM_n(\Bbbk)$ ($n\times n$ matrices over a field $\Bbbk\in \left\{\bR,\bC\right\}$; occasionally plain $\cM_n$) generalizing the spaces of Hermitian matrices in having their spectra constrained to \emph{simple curves} in the ambient $\Bbbk$ (connected, unless specified otherwise). For our purposes, the phrase refers to the image of a continuous injection $I\lhook\joinrel\xrightarrow{\gamma}\Bbbk$ defined on an interval $I\subseteq \bR$ (closed or open or half-open, bounded or not). In this context we occasionally conflate maps $\gamma$ and their images; for $\Bbbk=\bR$, naturally, simple curves are nothing but intervals. 

\begin{notation}\label{not:hlgamma}
  \begin{enumerate}[(1),wide]
  \item Fix a subset $\Lambda\subseteq \Bbbk\in \left\{\bR,\bC\right\}$, an $n\in \bZ_{\ge 1}$.

    We write $\cH_{n\mid \Lambda}=\cH_{n\mid \Lambda}(\Bbbk)\subseteq \cM_n(\Bbbk)$ for any one of the following spaces of $n\times n$ matrices:
    \begin{itemize}[wide]
    \item having spectra contained in $\Lambda$;
    \item and possibly also \emph{semisimple} (i.e. diagonalizable over $\bC$) or \emph{normal} (i.e. commuting with their adjoints).
    \end{itemize}
    The symbol thus stands for any one of several spaces. In practice the distinction will not matter much (which is why it is convenient to have common notation); when not relying solely on context to distinguish we use superscripts: $\cH^{\circ}$, $\cH^{ss}$ and $\cH^*$ for arbitrary, semisimple and normal respectively. Alternatively, the superscript in $\cH^{\bullet}$ might serve as a collective placeholder. 
    
  \item We extend the notation to
    \begin{equation*}
      \cH_{n\mid \gamma}
      :=
      \cH_{n\mid \gamma(I)}
      ,\quad
      I\xrightarrow[\quad\text{map}\quad]{\quad\gamma\quad}\Bbbk.
    \end{equation*}
    In particular, this applies to simple curves. 
  \end{enumerate}
\end{notation}

\begin{theorem}\label{th:gen.herm}
  For $n\in \bZ_{\ge 3}$ and a simple curve $I\lhook\joinrel\xrightarrow{\iota}\Bbbk\in \left\{\bR,\bC\right\}$ the continuous, commutativity- and spectrum-preserving maps $\cH^*_{n\mid \gamma}(\Bbbk)\xrightarrow{\phi}\cM_n(\bC)$ are precisely those of one of the following two types.
  \begin{enumerate}[(a),wide]
  \item\label{item:th:gen.herm:cj} conjugation $\Ad_T:=T\left(-\right)T^{-1}$ or transpose conjugation $\Ad_T\circ (-)^t$ for some $T\in GL(n,\bC)$;

  \item\label{item:th:gen.herm:fix} or of the form
    \begin{equation*}
      \cH^*_{n\mid \gamma}
      \ni
      X
      \xmapsto{\quad\phi\quad}
      \Ad_T\mathrm{diag}\left(\lambda_1(X),\ \cdots,\ \lambda_n(X)\right)
    \end{equation*}
    where $\lambda_i(X)$ constitute the spectrum of $X$, arranged so that $\left(\gamma^{-1}\lambda_i(X)\right)_i\subset I$ is non-decreasing in $i$.
  \end{enumerate}
\end{theorem}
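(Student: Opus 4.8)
The plan is to reduce the simple-curve statement to the previously settled real-interval (Hermitian) case by \emph{straightening} the spectrum along $\gamma$ and then quoting the known classifications \cite[Theorem B]{2505.19393v3} for $\Bbbk=\bC$ and \cite[Main theorem]{Petek-HM} for $\Bbbk=\bR$. Relying on $\gamma$ being an embedding, so that $\gamma^{-1}\colon\gamma(I)\to I\subseteq\bR$ is continuous, continuous functional calculus for normal matrices makes $S:=\gamma^{-1}(-)\colon\cH^*_{n\mid\gamma}\to\cH^*_{n\mid I}$ a commutativity-preserving homeomorphism onto the Hermitian model $\cH_{n\mid I}$, with inverse $\gamma(-)$; it fixes every spectral projection and merely relabels eigenvalues monotonically along the curve. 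I would transport $\phi$ through $S$ and a matching relabeling on the codomain to a continuous, commutativity- and spectrum-preserving map $\cH_{n\mid I}\to\cM_n(\bC)$, apply the cited three-way dichotomy, and conjugate back by $\gamma(-)$: since $\Ad_T$ and $(-)^t$ commute with functional calculus, the conjugation and transpose-conjugation options reproduce (a), while the fixed-basis ordered-spectrum option becomes (b) with the order read off by $\gamma^{-1}$ exactly as stated. The continuity of $\gamma^{-1}$ is not automatic over $\bC$ — a half-open interval can close up into a loop — and this is precisely the regularity whose failure removes option (b), matching the closed-curve application.

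The first real obstacle is that the codomain is $\cM_n(\bC)$, so $\phi(X)$ need not be normal and ordinary functional calculus is unavailable for the output relabeling. I would run the construction first on the dense open locus of simple-spectrum $X$, where $\phi(X)$ carries the same $n$ distinct eigenvalues and is therefore diagonalizable, with rank-one spectral idempotents $Q_i(X)$ varying continuously; writing $\phi(X)=\sum_i\mu_i Q_i(X)$ and setting $\Psi(SX):=\sum_i\gamma^{-1}(\mu_i)\,Q_i(X)$ yields a continuous, spectrum- and commutativity-preserving map there. The technical crux of the reduction is to show that $\Psi$ extends continuously across eigenvalue collisions to all of $\cH_{n\mid I}$ — equivalently, that the idempotents $Q_i$ recombine compatibly at the walls where two eigenvalues merge — so that the cited theorems apply to a map defined on the entire Hermitian model.

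Underlying the output classification, and what must be seen to survive the transport, is a rigidity governed by the symmetric group. Fixing the maximal torus $\cD$ of diagonal matrices, commutativity-preservation makes $\{\phi(D):D\in\cD\}$ a commuting family; on each connected component of its simple locus — and these components are indexed by the $n!$ orderings of the eigenvalues along $\gamma$ — the simple-spectrum members are simultaneously diagonalizable, furnishing a basis $T$ and a permutation $\sigma$ with $\diag(\lambda_1,\dots,\lambda_n)\mapsto T\,\diag(\lambda_{\sigma(1)},\dots,\lambda_{\sigma(n)})\,T^{-1}$. Adjacent chambers are separated by a wall on which two eigenvalues coincide, and continuity of $\phi$ across it forces the two idempotents to merge, so that crossing the wall either leaves $\sigma$ fixed or composes it with the wall's transposition.

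I expect this wall-crossing bookkeeping to be the main obstacle. For $n\ge 3$ the symmetric group is generated by adjacent transpositions subject to the braid relations, and global single-valued continuity of $\phi$ constrains the chamber-to-permutation assignment so severely that only two solutions survive: the rigid one, in which $\phi$ tracks each eigenspace and one lands in (a), and the total-sorting one, in which $\phi$ places the eigenvalues in a fixed order irrespective of input and one lands in (b); no \emph{partial} sorting is consistent, and $n=2$ genuinely fails this rigidity. Finally, promoting the single-torus description to all of $\cH^*_{n\mid\gamma}$ and separating conjugation from transpose-conjugation I would handle by a fundamental-theorem-of-projective-geometry / Wigner-type argument, recognizing the induced orthogonality-preserving self-bijection of the line Grassmannian $\bC P^{n-1}$ as implemented by a linear or a conjugate-linear isometry, hence by $\Ad_T$ or $\Ad_T\circ(-)^t$.
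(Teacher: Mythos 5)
Your second half---the chamber-by-chamber permutation bookkeeping, the observation that for $n\ge 3$ wall-crossing consistency forces the assignment to be either fully eigenspace-tracking or fully sorting, and the projective-geometry endgame---is essentially the route the paper takes (\Cref{pr:eqvr.triv}, \Cref{cor:id.triv}, \Cref{pr:n3.if.ct} and the proof of \Cref{th:gen.herm}). The first half, however, has a genuine gap: the ``straightening'' reduction cannot deliver the theorem. For $\Bbbk=\bR$ the cited \cite[Main theorem]{Petek-HM} classifies only \emph{self}-maps of the real Hermitian matrices, whereas here the codomain is $\cM_n(\bC)$; the paper presents precisely this enlargement of the codomain as part of what is being proved, so quoting it is circular. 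Moreover, after straightening an embedded curve you land on $\cH_{n\mid I}$ for a generally \emph{proper} subinterval $I\subsetneq\bR$, and neither cited theorem classifies preservers on such spectrally constrained subspaces; and for non-embedded $\gamma$ (your own half-open-interval-closing-into-a-loop example) the reduction is unavailable from the start. Your proposed repair---relabel the outputs by $\gamma^{-1}$ on the simple locus and extend across collisions---founders on exactly the point you flag as the crux: at a collision $\phi(X)$ need not be diagonalizable (spectrum preservation does not force semisimplicity of the image when the codomain is all of $\cM_n(\bC)$), so the idempotents $Q_i$ need not recombine and the continuous extension is not established.

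In the direct route, the remaining soft spot is that your permutation analysis lives on a single maximal torus $\cD$, and the passage to ``the induced orthogonality-preserving self-bijection of the line Grassmannian'' is asserted rather than proved. What must be shown is that $\cK_i(\phi T)$ depends only on the eigenspace tuple of $T$, uniformly over \emph{all} $T\in\cH^{*+}_{n\mid\gamma}$ and not just diagonal ones, and likewise that $\cK_S(\phi T)$ depends only on the subspace $\cK_S(T)$; this is the content of \Cref{pr:f2f} (constancy of $\cK_{\crc{\mu}}\circ\phi$ on the connected fibers of $\cK_{\crc{\mu}}$) and of step (I) in the paper's proof of \Cref{th:gen.herm}, and without it there is no map of Grassmannians to feed to the fundamental theorem of projective geometry. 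That theorem moreover needs the compatibility \Cref{eq:if.operp} of $\Phi$ with sums and intersections of subspaces with commuting projections, which the paper extracts from the $S_n$-equivariance established in the dichotomy step. Neither point is hard within your framework, but both are load-bearing and absent from the sketch.
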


This generalizes a number of results in the literature in a few ways. 
\begin{itemize}[wide]
\item The case $\cH^{*}_{n\mid \bR\subset \bC}$ is that of ordinary Hermitian matrices (hence \Cref{se:curv}'s title below), recovering \cite[Theorem B]{2505.19393v3}. In turn, that result slightly expanded the complex-Hermitian half of \cite[Main theorem]{Petek-HM} by allowing $M_n(\bC)$ (as opposed to only $\cH^{*}_{n\mid \bR\subset \bC}$) as a codomain.

\item The real case $\cH^*_{n\mid \bR}(\bR)$ generalizes the real-Hermitian half of the same \cite[Main theorem]{Petek-HM}, again by enlarging the codomain to all of $M_n(\bC)$.
\end{itemize}

\Cref{th:gen.herm} can also, incidentally, serve as a precursor for a generalized-\emph{unitary} version. As a particular case, one can recover the unitary version of \cite[Theorem 2.1]{2501.06840v2}, to the effect that continuous commutativity-and-spectrum preservers $\U(n)\to \cM_n(\bC)$ are type-\Cref{item:th:gen.herm:cj}: take $\Lambda:=\bS^1\subset \bC$ in \Cref{th:scl} below. 

Simple \emph{closed} (as opposed to plain) curves in topological spaces are subspaces thereof homeomorphic to the circle $\bS^1$ (matching the terminology of \cite[pre Theorem 61.3]{mnk}, for example).

\begin{theorem}\label{th:scl}
    For $n\in \bZ_{\ge 3}$ and a simple closed curve $\Lambda\subset \bC$ the continuous, commutativity- and spectrum-preserving maps $\cH^*_{n\mid \Lambda}(\Bbbk)\xrightarrow{\phi}\cM_n(\bC)$ are precisely the conjugations or transpose conjugations. 
\end{theorem}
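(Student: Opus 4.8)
The plan is to localize using \Cref{th:gen.herm} and then exploit the fact that a circle, unlike an interval, carries no continuous linear order. For each point $p\in\Lambda$ the punctured curve $\Lambda\setminus\{p\}$ is homeomorphic to an open interval and is the image of a continuous injection into $\bC$, hence a simple curve in the sense operative in \Cref{th:gen.herm}; restricting $\phi$ to $\cH^*_{n\mid\Lambda\setminus\{p\}}$ thus exhibits it, on that subdomain, as a map of type \Cref{item:th:gen.herm:cj} or \Cref{item:th:gen.herm:fix}. The first step is to promote this to a global dichotomy: any two punctures $p\ne q$ yield arcs overlapping in the non-degenerate sub-arcs of $\Lambda\setminus\{p,q\}$, and on the matrix space over any such sub-arc the alternatives are pairwise distinct — for type \Cref{item:th:gen.herm:cj} the eigenspaces of $\phi(X)$ track those of $X$ (which range over a full unitary orbit), whereas for type \Cref{item:th:gen.herm:fix} they are pinned to the fixed frame $\{Te_i\}_i$, and conjugation versus transpose conjugation are likewise separated by their action on that orbit. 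Restriction therefore forces the local type to agree across overlaps, and since puncturing at every $p$ covers $\Lambda$, a single type prevails throughout.

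If that type is \Cref{item:th:gen.herm:cj}, I would finish by gluing: on overlaps the local conjugators satisfy $\Ad_{T_p}=\Ad_{T_q}$ (after the common choice of transpose-or-not), forcing $T_q\in\bC^\times T_p$, so a single $T$ and a single choice of transpose-or-not serve on every arc. As each $X\in\cH^*_{n\mid\Lambda}$ has finite spectrum and hence misses some point of the infinite curve $\Lambda$, the formula $\Ad_T$ (resp. $\Ad_T\circ(-)^t$) computes $\phi$ everywhere, and we are done.

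The crux is to rule out the surviving possibility that $\phi$ is of type \Cref{item:th:gen.herm:fix} on every arc. Here I would observe that such a $\phi$ depends only on the \emph{unordered} spectrum of $X$ — it discards the eigenspaces of $X$ and redeposits the eigenvalues, sorted, onto the fixed frame $\{Te_i\}_i$ — so on the dense locus of simple-spectrum matrices it factors continuously through the space $\mathrm{UConf}_n(\Lambda)$ of unordered $n$-point subsets of $\Lambda$. Continuity of the resulting assignment of eigenvalues to the labelled lines $Te_1,\dots,Te_n$ is exactly a continuous section of the $n!$-fold covering $\mathrm{OConf}_n(\Lambda)\to\mathrm{UConf}_n(\Lambda)$. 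Since $\Lambda\cong\bS^1$, this covering has non-trivial monodromy — rigidly rotating $n$ equally spaced points by $2\pi/n$ is a loop downstairs whose lift is the cyclic permutation $(1\,2\,\cdots\,n)$ — so no continuous section exists and type \Cref{item:th:gen.herm:fix} is untenable. Without configuration spaces one can argue equivalently by fixing two cut points $p,q$ and comparing the type-\Cref{item:th:gen.herm:fix} descriptions furnished by $\Lambda\setminus\{p\}$ and $\Lambda\setminus\{q\}$: matching eigenspaces identifies their frames up to a fixed permutation, whereupon the two sortings would have to differ by that same permutation on every admissible spectrum; but a spectrum placing $p,q$ in a common gap of $\Lambda$ forces the identity, while one separating them forces a non-trivial cyclic permutation, a contradiction. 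The main obstacle is precisely this last step — setting up the reduction cleanly enough that the circle's lack of a global ordering, rather than any analytic input, is what does the work.
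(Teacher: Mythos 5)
Your proposal is correct and follows essentially the same route as the paper's proof: restrict to the arcs $\Lambda\setminus\{p\}$, invoke \Cref{th:gen.herm} there, use continuity on overlaps to force a single type globally, and then rule out type \Cref{item:th:gen.herm:fix} by observing that the sorted-spectrum prescriptions coming from different punctures $p$ are mutually inconsistent (your covering-space/monodromy phrasing and your two-cut-point comparison are both just reformulations of the paper's remark that the counterclockwise ordering of a simple spectrum depends on which gap $p$ lies in). The extra care you take with type coherence on overlaps and with gluing the conjugators in the type-\Cref{item:th:gen.herm:cj} case fills in details the paper leaves implicit, but introduces no new idea.
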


The situation differs drastically for $\cH^{\circ,ss}$: there are somewhat surprising new possibilities for what a continuous commutativity and spectrum preserver might look like; there is also an added (and perhaps surprising) caveat that the geometric/analytic regularity properties of $\Lambda$ seem to play a role in whether or not such candidates fulfill the requirements (the sticking point being continuity). A few reminders will prepare the terrain.

\begin{itemize}[wide]
\item The \emph{positive} operators \cite[Definition 5.2.1]{li_real-oa_2003} (here, matrices) are the normal ones with non-negative spectrum; a `$\le 0$' subscript will indicate positivity in various matrix spaces.

\item Arbitrary $T\in \GL(\Bbbk)$ admit a \emph{polar decomposition} (\cite[\S I.5.2.2]{blk}, going through as expected \cite[Theorem 1.2.5]{li_real-oa_2003} for \emph{real} $C^*$-algebras also)
  \begin{equation*}
    T=|T^*|U
    ,\quad
    |T^*|:=\left(TT^*\right)^{1/2}
    ,\quad
    U\text{ unitary/orthogonal}.
  \end{equation*}

\item Consequently, as isometric conjugations preserve normality, semisimple operators are positive conjugates of normal ones; this will be implicit in the statement of \Cref{th:if.l.reg}.
\end{itemize}

There is also the matter of ``regularity'' for $\Lambda$, referenced above; this refers to variants of $C^k$-differentiability for arbitrary $\Lambda\subset \bC$. Write $\cC^k \Lambda$ for the \emph{configuration space} \cite[Definition 1.1]{zbMATH05785888} of distinct $k$-tuples of a set $E$. For a symmetric map $\cC^n\Lambda\xrightarrow{f}\bC$, $\Lambda\subseteq \bC$ denote by
\begin{equation*}
  \cC^{n+1}\Lambda
  \ni
  (x_0\cdots x_n)
  \xmapsto{\quad\Delta f\quad}
  \frac{f(x_1\cdots x_n)-f(x_0\cdots x_{n-1})}{x_n-x_0}
\end{equation*}
its image through the \emph{difference-quotient operator} $\Delta$. Following \cite[Definition 3.1]{zbMATH06285212} (with an index shift), a function $\Lambda\xrightarrow{f}\bC$ is
\begin{itemize}[wide]
\item \emph{DB$^n$} if $\Delta^{n-1} f$ (an $n$-variable function) is bounded locally around any diagonal point $(xx\cdots x)$ for \emph{cluster points} \cite[Definition 4.9]{wil_top} $x\in \Lambda$;

\item and \emph{DC$^n$} if $\Delta^{n-1} f$ has a finite limit at every $(xx\cdots x)$ for cluster points $x\in \Lambda$. 
\end{itemize}

\begin{theorem}\label{th:if.l.reg}
  Let $n\in \bZ_{\ge 2}$ and $\Lambda\subseteq \Bbbk\in \left\{\bR,\bC\right\}$ a simple curve.
  \begin{enumerate}[(1),wide]

  \item\label{item:th:if.l.reg:is.cs} All compositions of
    \begin{itemize}[wide]
    \item type-\Cref{item:th:gen.herm:cj} or -\Cref{item:th:gen.herm:fix} maps from \Cref{th:gen.herm};
    \item and the involution
      \begin{equation}\label{eq:rev.map}
        \cH^{ss}_{n\mid\Lambda}
        \ni
        \Ad_R N
        =
        X
        \xmapsto{\quad (-)^{\rho}\quad}
        X^{\rho}
        :=
        \Ad_{R^{-1}}N
        ,\quad
        \left[
          \begin{gathered}
            N\in \cH^*_{n\mid \Lambda}\\
            R\in \GL(\Bbbk)_{\ge 0}
          \end{gathered}
        \right.
      \end{equation}
    \end{itemize}
    are commutativity and spectrum preservers $\cH^{ss}_{n\mid\Lambda}\to \cM_n(\bC)$. 
    
  \item\label{item:th:if.l.reg:ddb} Said maps are all continuous if and only if the complex conjugation map $\overline{(-)}$ is DB$^{n}$ on $\Lambda$.
    
  \item\label{item:th:if.l.reg:ddc} Furthermore, the maps from \Cref{item:th:if.l.reg:ddb} above all extend continuously to $\cH^{\circ}_{n\mid \Lambda}$ if and only if $\overline{(-)}|_{\Lambda}$ is DC$^{n}$.
  \end{enumerate}
\end{theorem}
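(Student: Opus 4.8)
The plan is to funnel all three parts through a single object, the involution $(-)^{\rho}$, which is the only genuinely new ingredient. First I would make it explicit. For semisimple $X=\sum_{\lambda}\lambda P_{\lambda}$ with spectral idempotents $P_{\lambda}$, put $G:=\sum_{\lambda}P_{\lambda}^{*}P_{\lambda}$, a positive invertible operator; one checks $GP_{\lambda}G^{-1}=P_{\lambda}^{*}$, so $\Ad_{G^{1/2}}X$ is normal. Moreover any positive $R$ with $\Ad_{R^{-1}}X$ normal has $R^{-2}$ sending each eigenspace of $X$ onto the orthogonal complement of the sum of the others—exactly as $G$ does—so $R^{-2}$ and $G$ differ by an operator commuting with $X$, giving $\Ad_{R^{-2}}X=\Ad_{G}X$. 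Hence
\[
X^{\rho}=\Ad_{G}X=\sum_{\lambda}\lambda\,P_{\lambda}^{*}=\Bigl(\sum_{\lambda}\overline{\lambda}\,P_{\lambda}\Bigr)^{*}=C(X)^{*},\qquad C(X):=\sum_{\lambda}\overline{\lambda}\,P_{\lambda},
\]
is well defined independently of the decomposition $X=\Ad_{R}N$, and is an involution because $\Ad_{R^{-1}}N$ is itself such a decomposition of $X^{\rho}$. The key structural point is the middle equality: $(-)^{\rho}$ is the \emph{primary matrix function} of complex conjugation, post-composed with the (globally continuous) adjoint. Since conjugations and the transpose are homeomorphisms of $\cM_{n}(\bC)$ fixing the normal matrices, and the type-\Cref{item:th:gen.herm:fix} maps depend only on the spectrum and hence install a constant eigenframe (so any $(-)^{\rho}$ following them is continuous), every continuity and preservation question about the composites in the statement reduces to the corresponding question for $C$.

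For \Cref{item:th:if.l.reg:is.cs}: spectrum preservation is immediate, as $X^{\rho}=\Ad_{G}X$ is conjugate to $X$ and conjugations and transposes preserve the spectrum. For commutativity, the $P_{\lambda}$ are polynomials in $X$, so commuting $X,Y\in\cH^{ss}_{n\mid\Lambda}$ have mutually commuting spectral idempotents; taking adjoints, the $P_{\lambda}(X)^{*}$ and $P_{\mu}(Y)^{*}$ commute, whence $X^{\rho}=\sum_{\lambda}\lambda P_{\lambda}(X)^{*}$ and $Y^{\rho}=\sum_{\mu}\mu P_{\mu}(Y)^{*}$ commute. As conjugations and transposes preserve commutativity and a composite of preservers is a preserver, \Cref{item:th:if.l.reg:is.cs} follows; note $(-)^{\rho}$ is the identity on normal matrices, so no degenerate composite contributes anything unexpected.

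Parts \Cref{item:th:if.l.reg:ddb} and \Cref{item:th:if.l.reg:ddc} then amount to: $C$ is continuous on $\cH^{ss}_{n\mid\Lambda}$, respectively extends continuously to $\cH^{\circ}_{n\mid\Lambda}$, precisely under the stated regularity of $\overline{(-)}|_{\Lambda}$. Off the diagonal of the spectrum $C$ is smooth, so I would localize at a point whose spectrum has a cluster of eigenvalues collapsing to some $\mu\in\Lambda$ of total size $k\le n$, and expand $C(X_{s})$ for $X_{s}\to X_{0}$ on the corresponding spectral subspace in the standard divided-difference (Daleckii--Krein type) form: in an eigenframe the entries of $C(X_{s})$ are the divided differences $\Delta^{j}\overline{(-)}$, $0\le j\le k-1$, of the clustering eigenvalues, scaled by frame factors. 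What governs everything is the ratio of eigenvalue spread to eigenframe degeneration: it tends to $0$ along a family with a \emph{semisimple} limit and to a finite nonzero value along a family whose limit carries a nontrivial Jordan block. In the first case boundedness of the top divided difference $\Delta^{n-1}\overline{(-)}$ near the diagonal—i.e.\ $\mathrm{DB}^{n}$—forces the off-diagonal entries to vanish and yields continuity, while a matched degenerating family realizing an unbounded divided difference shows the converse. In the second case the off-diagonal entries converge to the genuine limits of the divided differences along the nilpotent directions, and these exist for every cluster exactly when $\overline{(-)}|_{\Lambda}$ is $\mathrm{DC}^{n}$; direction-dependent cluster limits (already for $\overline{\lambda_{1}-\lambda_{0}}/(\lambda_{1}-\lambda_{0})$ at a size-$2$ Jordan block) obstruct the extension otherwise. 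The single index $n-1$ is forced because at most $n$ eigenvalues can coalesce in $\cM_{n}(\bC)$.

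The hard part is precisely this local analysis: producing a normal form for $C$ near an arbitrary degenerate spectrum that cleanly separates eigenframe degeneration from eigenvalue spread, and matching the surviving off-diagonal blocks to the divided differences $\Delta^{j}\overline{(-)}$ with the correct powers, so that boundedness delivers continuity on the semisimple stratum and convergence delivers the extension across Jordan blocks. Reconciling the combinatorics of eigenvalue multiplicities, cluster partitions and Jordan block sizes with the single order $n-1$ in $\mathrm{DB}^{n}$/$\mathrm{DC}^{n}$, and building the explicit degenerating families that furnish the two converses, is where the real work lies.
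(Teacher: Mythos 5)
Your treatment of part (1) is correct and in fact more self-contained than the paper's: the explicit choice $G=\sum_\lambda P_\lambda^*P_\lambda$ with $GP_\lambda G^{-1}=P_\lambda^*$, the well-definedness argument via $G^{-1}R^{-2}$ preserving eigenspaces, and the derivation of commutativity preservation from the fact that spectral idempotents of commuting semisimple matrices are commuting polynomials in them (so their adjoints commute) replace the paper's citations of Niemiec's Lemma~6.2 and of the Putnam--Fuglede theorem by elementary computations. Your key structural identification --- that up to the globally continuous maps $(-)^*$, $\Ad_T$, $(-)^t$ and the type-\Cref{item:th:gen.herm:fix} maps, everything reduces to the single map $C(X)=\sum_\lambda\overline{\lambda}\,P_\lambda(X)$, i.e.\ the functional calculus of complex conjugation on semisimple matrices --- is exactly the reduction the paper performs for parts (2) and (3).

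The gap is in what comes after that reduction. The paper closes parts (2) and (3) by quoting Niemiec (\emph{Studia Math.}\ 149 (2002), Theorem 4.3 (ii)$\Leftrightarrow$(iv) and Proposition 4.5 (i)$\Leftrightarrow$(iii)), which say precisely that for a function $f$ on $\Lambda$ the map $X\mapsto f(X)$ is continuous on diagonalizable matrices with spectrum in $\Lambda$ iff $f$ is DB$^n$, and extends continuously to all such matrices iff $f$ is DC$^n$. You neither invoke these results nor prove them: your divided-difference/Daleckii--Krein sketch correctly identifies the mechanism (off-diagonal entries of $C(X_s)$ in a degenerating eigenframe are divided differences of $\overline{(-)}$ scaled by frame factors, with Jordan directions producing the genuine limits), but the normal form separating eigenframe degeneration from eigenvalue spread, the matching of block entries to $\Delta^j\overline{(-)}$ with the right powers, and the construction of the degenerating families giving both converses are exactly the content of Niemiec's proofs, and you explicitly defer them as ``where the real work lies.'' As written, parts (2) and (3) are therefore reduced to a true but unestablished analytic statement; either cite the Niemiec equivalences or carry out that local analysis in full.
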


Cf. \cite[Proposition 2.8]{2501.06840v2} for another appearance, in a parallel context, of the selfsame map \Cref{eq:rev.map}. The extent to which \Cref{th:if.l.reg} admits a converse and the possible shape such a converse might take are the subject of future work.

\section{Curve-constrained generalized Hermitian matrices}\label{se:curv}

We refer to maps preserving commutativity (or spectra, or both) as \emph{C, S or CS preservers} respectively for brevity (the respective adjective phrases would be \emph{CS-preserving}, etc.).

\begin{remark}\label{re:+not}
  Observe that maps of type either \Cref{item:th:gen.herm:cj} or \Cref{item:th:gen.herm:fix} certainly do meet the requirements, so the focus throughout will be on the converse. It will also be convenient to indicate spaces of \emph{simple} operators (i.e. those with simple spectrum) contained in the various $\cH$ with an additional `$+$' superscript: $\cH_n^{ss+}(\bR)$, $\cH_n^{*+}$, etc.
\end{remark}

Some terminology will help streamline some of the discussion. 

\begin{definition}\label{def:orient}
  A \emph{parametrization} of a simple curve $\Lambda\subseteq X$ in a topological space $X$ is a continuous bijection $I\xrightarrow{\gamma}\Lambda$ for some interval $I\subseteq \bR$. 
  
  An \emph{orientation} of $\Lambda$ is a class of parametrizations, two declared equivalent whenever they fit into a commutative triangle
  \begin{equation*}
    \begin{tikzpicture}[>=stealth,auto,baseline=(current  bounding  box.center)]
      \path[anchor=base] 
      (0,0) node (l) {$I$}
      +(2,.5) node (u) {$I'$}
      +(4,0) node (r) {$\Lambda$}
      ;
      \draw[->] (l) to[bend left=16] node[pos=.5,auto,swap] {$\scriptstyle \cong$} node[pos=.5,auto] {$\scriptstyle \text{increasing homeomorphism}$} (u);
      \draw[->] (u) to[bend left=6] node[pos=.5,auto] {$\scriptstyle \gamma'$} (r);
      \draw[->] (l) to[bend right=6] node[pos=.5,auto,swap] {$\scriptstyle \gamma$} (r);
    \end{tikzpicture}    
  \end{equation*}
  A parametrization in the class singled out by the orientation is \emph{compatible} with that orientation, or simply \emph{positive} (with respect to the orientation). 
  
  For a simple curve $\Lambda$ with a fixed orientation (i.e. an \emph{oriented} simple curve) we write $\lambda\le \lambda'\in \Lambda$ if $\gamma^{-1}\lambda\le \gamma^{-1}\lambda'$ for some orientation-positive parametrization $I\xrightarrow{\gamma}\Lambda$.
\end{definition}

\begin{notation}\label{not:flg}
  \begin{enumerate}[(1),wide]
  \item For $\Bbbk\in \left\{\bR,\bC\right\}$ and a finite-dimensional $\Bbbk$-Hilbert space (mostly the standard $\Bbbk^n$, $n\in \bZ_{\ge 1}$)
    \begin{equation*}
      \bF^{\bullet}(V)
      :=
      \left\{
        \text{$\left(\dim V\right)$-tuples of lines in $V$, }
        \left[
          \begin{aligned}
            \text{linearly independent}&\text{ if $\bullet=ss$}\\
            \text{mutually orthogonal}&\text{ if $\bullet=*$}
          \end{aligned}
        \right.
      \right\}
      \subset
      \left(\bP V\right)^{\dim V}
    \end{equation*}
    (variants of the usual \cite[\S 2]{MR106911} \emph{flag variety} attached to $V$). 

    Note that all $\bF^{\bullet}$ are equipped with free (left) actions
    \begin{equation*}
      \bF^{\bullet}(\Bbbk)
      \ni
      (\ell_i)_{i=1}^{\dim V}
      =:
      x
      \xmapsto{\quad\theta\in S_{\dim V}\quad}
      \theta x
      :=
      (\ell_{\theta^{-1} i})_{i=1}^{\dim V}
      \in
      \bF^{\bullet}(\Bbbk)
    \end{equation*}
    by the respective symmetric groups $S_{\dim V}$, permuting the lines of each independent/orthogonal tuple.

  \item More generally, consider a partition
    \begin{equation*}
      \mu=\left(\mu_1\ge \cdots\ge \mu_s>0\right)
      ,\quad
      \sum_j\mu_j=n
      \quad
      \left(\text{shorthand: }\mu\vdash n\right).
    \end{equation*}
    We conflate $\mu$ and the associated\emph{Young diagram} \cite[Notation]{fult_y_1997}: left-aligned rows of boxes of respective lengths $\lambda_i$, longest rows placed higher up.
    
    Set
    \begin{equation*}
      \bF_{\mu}^{\bullet}(V)
      :=
      \left\{
        \left(V_1,\ \cdots,\ V_s\right)
        \ :\
        V_j\le V
        ,\quad
        \dim V_j=\mu_j
        ,\quad
        \left[
        \begin{aligned}
          V_j\perp V_{j'\ne j}&\text{ if $\bullet=ss$}\\
          \sum V_j=V&\text{ if $\bullet=*$}
        \end{aligned}
        \right.
      \right\}.
    \end{equation*}
    This recovers the previous construct as $F^{\bullet}=F^{\bullet}_{(11\cdots 1)}$. 
  \end{enumerate}
\end{notation}

\begin{remarks}\label{res:fss}
  \begin{enumerate}[(1),wide]

  \item\label{item:res:fss:fss} In its $\bullet=ss$ variant \Cref{not:flg} makes sense over arbitrary fields, as Hilbert-space structures play no role.
    
  \item\label{item:res:fss:fconn} The spaces $\bF^{\bullet}(\Bbbk)$, $\Bbbk\in \left\{\bR,\bC\right\}$ can be identified with quotients $\G_n/\Bbbk^n$ by the actions scaling the columns of $\G_n$, where $\G\in \left\{\GL(\Bbbk),\orth,\U\right\}$ (general linear, orthogonal, unitary). In particular, said spaces are all connected.
  \end{enumerate}
\end{remarks}

Spectra in $\cH_{n\mid \Lambda}$ being orderable for an oriented curve $\Lambda\subseteq \Bbbk$, we have maps
\begin{equation}\label{eq:h2f}
  \cH^{\bullet +}_{n\mid \Lambda}(\Bbbk)
  \ni
  T
  \xmapsto[\quad\lambda_1\le \cdots\le \lambda_n\quad]{\quad\quad}
  \left(\text{$\lambda_i(T)$-eigenspace}\right)_{i=1}^n
  \in
  \bF^{\bullet}(\Bbbk^n)
\end{equation}
(continuous, by a variant of \cite[Proposition 13.4]{salt_divalg}, say). We retain below the convention adopted in \Cref{th:gen.herm} (and in place also in \Cref{eq:h2f}) of writing $\left(\lambda_i(T)\right)_{i=1}^n$ for the spectrum of $T\in \cH_{n\mid \Lambda}$ ordered according to a fixed orientation of the simple curve $\Lambda\subseteq \Bbbk$. 

The proof of \cite[Theorem 2.1]{2501.06840v2}, analogous to \Cref{th:gen.herm}, relies on leveraging a continuous CS preserver to induce a continuous self-map $\Phi=\Phi_{\phi}$ of the \emph{Grassmannian} \cite[\S 3.3.2]{3264}
\begin{equation*}
  \bG
  =
  \bG(\bC^n)
  :=
  \bigsqcup_{1\le d\le n-1}\bG\left(d,\bC^n\right)
  ,\quad
  \bG(d,\cV):=\left\{\text{$d$-dimensional subspaces of }\cV\right\},
\end{equation*}
appropriately compatible with the \emph{lattice} \cite[Deﬁnition O-1.8]{ghklms_cont-latt_2003} operations of taking space sums $\vee$ and intersections $\wedge$, at which point the \emph{Fundamental Theorem of projective geometry} \cite[Theorem 3.1]{zbMATH01747827} becomes applicable. We will see in due course that the present setup affords some of the same machinery. In preparation for that:

\begin{notation}\label{not:multset.eigspcs}
  Let $T\in \cM_n(\bC)$ be a semisimple operator.
  \begin{enumerate}[(1),wide]
  \item\label{item:not:multset.eigspcs:kl} For $\Lambda\subseteq \bC$ we write
    \begin{equation}\label{eq:klt}
      \cK_{\Lambda}(T):=\sum_{\lambda\in \Lambda}\ker\left(\lambda-T\right).
    \end{equation}

  \item\label{item:not:multset.eigspcs:enum} If the spectrum $\sigma(T)$ is contained in an oriented simple curve $\Lambda\subseteq \bC$, so that \Cref{def:orient}'s order $\le$ is in scope, we enumerate the eigenvalues as $\lambda_1(T)\le \cdots\le \lambda_n(T)$.
    
  \item\label{item:not:multset.eigspcs:ks} More generally, for $S\subseteq [n]:=\left\{1..n\right\}$ set
    \begin{equation*}
      \lambda_S(T):=\left\{\lambda_i(T)\ :\ i\in S\right\}
      ,\quad
      \cK_S(T):=\cK_{\lambda_S(T)}(T)
      \quad
      \left(\text{the latter as in \Cref{eq:klt}}\right).
    \end{equation*}
    Note that $\cK_S(T)+\cK_{[n]\setminus S}(T)=\bC^n$, with the sum direct when $\lambda_S$ and $\lambda_{[n]\setminus S}$ happen not to overlap (e.g. for simple operators).

  \item\label{item:not:multset.eigspcs:tblx} Building on \Cref{item:not:multset.eigspcs:ks}, consider a \emph{tableau $\crc{\mu}$ of shape}
    \begin{equation*}
      \left(\mu_1\ge \cdots\ge \mu_s>0\right)
      =
      \mu\vdash n,
    \end{equation*}
    i.e. \cite[\S 4.1]{fh_rep-th} a filling of (the $n$ boxes constituting) the Young diagram $\mu$ with the elements of $[n]$; in symbols, $\crc{\mu}\vdash n$ again. Writing $\crc{\mu}_i\subseteq [n]$ for the set of symbols arrayed along the $i^{th}$ row of $\mu$, define
    \begin{equation*}
      \cK_{\crc{\mu}}(T)
      :=
      \left(\cK_{\crc{\mu}_1}(T),\ \cdots,\ \cK_{\crc{\mu}_s}(T)\right).
    \end{equation*}
    
  \item\label{item:not:multset.eigspcs:spec.dec} The spectral decomposition of $T$ will be denoted by
    \begin{equation*}
      T
      =
      \sum_{\lambda\in \sigma(T)}\lambda E_{\lambda}(T)
      ,\quad
      E_{\lambda}(T)=0
      \text{ for }
      \lambda\not\in \sigma(T)
    \end{equation*}
    with set-subscript variants 
    \begin{equation*}
      E_{\Lambda}(T):=\sum_{\lambda\in \Lambda} E_{\lambda}(T)
      ,\quad
      E_S(T):=E_{\lambda_S(T)}(T)
    \end{equation*}
    (the latter for curve-bound spectra). 
  \end{enumerate}
\end{notation}
The maps \Cref{eq:h2f} can now simply be denoted by either $\left(\cK_i\right)_i$ (occasionally $(\cK_i(-))_i$ for clarity) or $\cK_{\crc{\mu}}$ for the \emph{standard} \cite[Notation]{fult_y_1997} tableau $\crc{\mu}:=\left((1)(2)\cdots(n)\right)$ of shape $\mu:=(11\cdots 1)$.

\begin{proposition}\label{pr:f2f}
  For an oriented simple curve $\Lambda\subseteq \Bbbk$ a continuous CS preserver $\cH_{n\mid \Lambda}(\Bbbk)\xrightarrow{\phi}\cM_n(\bC)$, $n\in \bZ_{\ge 1}$ produces a commutative diagram
  \begin{equation}\label{eq:f2f}
    \begin{tikzpicture}[>=stealth,auto,baseline=(current  bounding  box.center)]
      \path[anchor=base] 
      (0,0) node (l) {$\cH^{\bullet +}_{n\mid \Lambda}(\Bbbk)$}
      +(3,.5) node (u) {$\bF_{\mu}^{\bullet}(\Bbbk^n)$}

      +(6,0) node (r) {$\bF_{\mu}^{\bullet}(\bC^n)$}
      ;
      \draw[->] (l) to[bend left=6] node[pos=.5,auto] {$\scriptstyle \cK_{\crc{\mu}}$} (u);
      \draw[->,dashed] (u) to[bend left=6] node[pos=.5,auto] {$\scriptstyle \wh{\phi}_{\crc{\mu}}$} (r);
      \draw[->] (l) to[bend right=6] node[pos=.5,auto,swap] {$\scriptstyle \cK_{\crc{\mu}}\circ\phi$} (r);
    \end{tikzpicture}
  \end{equation}
  of continuous maps for $\bullet\in \left\{ss,*\right\}$ for every partition
  \begin{equation*}
    \left(\mu_1\cdots \mu_s\right)
    =
    \mu
    \vdash n
    \quad\text{with}\quad
    \crc{\mu}
    :=
    \left(
      (1\cdots \mu_1)
      \
      (\mu_1+1\cdots \mu_1+\mu_2)
      \ 
      \cdots\right).
  \end{equation*}
\end{proposition}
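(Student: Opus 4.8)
The plan is to construct the dashed arrow $\wh\phi_{\crc{\mu}}$ fiberwise. Since $\cK_{\crc{\mu}}$ is a continuous surjection onto $\bF_{\mu}^{\bullet}(\Bbbk^n)$, it is enough to prove that $\cK_{\crc{\mu}}\circ\phi$ is constant on the fibers of $\cK_{\crc{\mu}}$ and then to upgrade the resulting set map to a continuous one. I would first dispatch the routine preliminaries. Because $\phi$ is a spectrum preserver, $\sigma(\phi(T))=\sigma(T)$; for $T\in\cH^{\bullet+}_{n\mid\Lambda}$ this is a set of $n$ distinct points of $\Lambda$, so $\phi(T)\in\cM_n(\bC)$ is again simple and, ordering both spectra along the oriented curve, $\lambda_i(\phi(T))=\lambda_i(T)$. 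Hence each eigenline $\cK_j(\phi(T))$ and each block space $\cK_{\crc{\mu}_i}(\phi(T))=\sum_{j\in\crc{\mu}_i}\cK_j(\phi(T))$ is defined, $\cK_{\crc{\mu}}\circ\phi$ lands in $\bF_{\mu}^{\bullet}(\bC^n)$, and it is continuous by continuity of the spectral projections of simple operators already invoked for \Cref{eq:h2f}. Surjectivity of $\cK_{\crc{\mu}}$ is equally routine: given a block tuple $(W_i)_i$ one manufactures a simple $T\in\cH^{\bullet+}_{n\mid\Lambda}$ by choosing a frame adapted to the $W_i$ and distinct $\Lambda$-eigenvalues increasing across the blocks.

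The heart of the matter --- and the step I expect to fight with --- is well-definedness: if $T,T'$ share block data, $\cK_{\crc{\mu}_i}(T)=\cK_{\crc{\mu}_i}(T')=:W_i$ for every $i$, then $\cK_{\crc{\mu}}(\phi(T))=\cK_{\crc{\mu}}(\phi(T'))$. The obstacle is that $T$ and $T'$ may carry completely different eigenlines inside each $W_i$, so they need not commute, and the naive ``commuting $\Rightarrow$ shared eigenlines'' comparison is unavailable. I would circumvent this by collapsing each operator onto the common \emph{block-scalar} operator $D\in\cH_{n\mid\Lambda}$ (a legitimate, if non-simple, point of $\phi$'s domain) acting as a single scalar $d_i\in\Lambda$, with $d_1<\cdots<d_s$, on $W_i$. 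Freezing the eigenlines of $T$ and sliding its block-$i$ eigenvalues along $\gamma$ towards $d_i$, keeping the clusters ordered and isolated, produces a continuous path $\tau\mapsto T_\tau$ of simple operators with $T_0=T$ and $T_\tau\to D$, all of whose members share the eigenlines of $T$ and hence pairwise commute.

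Now preservation of commutativity makes $\{\phi(T_\tau)\}$ a pairwise commuting family of simple operators, therefore simultaneously diagonalizable with a single common set of $n$ eigenlines. Each $\cK_j(\phi(T_\tau))$ is a continuous function of $\tau$ into this \emph{finite} set, hence constant, and so is every block sum $\cK_{\crc{\mu}_i}(\phi(T_\tau))$. Letting $\tau$ run to the collapse, continuity of $\phi$ gives $\phi(T_\tau)\to\phi(D)$, while the total eigenprojection $E_{\crc{\mu}_i}(\phi(T_\tau))$ of the block-$i$ cluster --- computed by a fixed Riesz contour isolating $d_i$ from the other $d_{i'}$ throughout --- converges to $E_{d_i}(\phi(D))$. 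Thus the constant value of $\cK_{\crc{\mu}_i}(\phi(T_\tau))$ equals $\im E_{d_i}(\phi(D))$, an object depending only on $D$, i.e. only on $(W_i)_i$. Running the identical argument for $T'$, whose block-scalar operator is the same $D$, yields the same spaces, so $\cK_{\crc{\mu}}(\phi(T))=\cK_{\crc{\mu}}(\phi(T'))$ and $\wh\phi_{\crc{\mu}}$ is well defined as a set map.

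Finally I would make $\wh\phi_{\crc{\mu}}$ continuous. Rather than seek a global section of $\cK_{\crc{\mu}}$ --- obstructed, since choosing eigenframes inside the moving blocks is a nontrivial flag-bundle section problem --- I would use local sections, which suffice as continuity is a local property. Over a small neighborhood $U$ of any point $(W_i^0)_i\in\bF_{\mu}^{\bullet}(\Bbbk^n)$ one continuously adapts a fixed reference frame to the nearby blocks and attaches fixed increasing $\Lambda$-eigenvalues, producing a continuous $s\colon U\to\cH^{\bullet+}_{n\mid\Lambda}$ with $\cK_{\crc{\mu}}\circ s=\id_U$; on $U$ one then has $\wh\phi_{\crc{\mu}}=(\cK_{\crc{\mu}}\circ\phi)\circ s$, a composite of continuous maps. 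The two cases $\bullet\in\{ss,*\}$ proceed in parallel, the only difference being that for $\bullet=*$ the adapted frames and block spaces on the $\Bbbk^n$-side are taken orthogonal so that $\cK_{\crc{\mu}}$ lands in the orthogonal flag variety; the collapse and section arguments are insensitive to this refinement.
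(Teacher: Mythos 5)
Your proposal is correct and follows essentially the same route as the paper: constancy of $\cK_{\crc{\mu}}\circ\phi$ on fibers via commutativity preservation plus connectedness in the simple-eigenline case, then, for general $\mu$, collapsing $T$ and $T'$ along eigenline-preserving paths onto a common block-scalar operator and invoking continuity of $\phi$ at that limit to identify the (constant) block data of $\phi(T_\tau)$ with that of $\phi(D)$. The only difference is that you spell out details the paper leaves implicit (surjectivity of $\cK_{\crc{\mu}}$, Riesz-projection convergence, local sections for continuity), which does not change the argument.
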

\begin{proof}
  In other words, the claim is that there is a well-defined dashed arrow factoring the bottom map as depicted. Continuity is again not an issue once $\wh{\phi}_{\crc{\mu}}$ has been defined, so it is the latter claim that is crucial.

  \begin{enumerate}[(I),wide]
  \item\label{item:pr:f2f:pf.clmn}\textbf{: $\mu=(11\cdots 1)$.} Observe first that the fibers $\cK_{\crc{\mu}}^{-1}(\bullet)$ are commuting families of simple operators. CS preservation ensures that every restriction $\left(\cK_{\crc{\mu}}\circ\phi\right)\bigg|_{\cK_{\crc{\mu}}^{-1}(\bullet)}$ is locally constant on the respective fiber, so must be constant by fiber connectedness.

  \item\label{item:pr:f2f:pf.gen}\textbf{: general case.} The difference to the preceding portion of the proof lies in the fibers $\cK_{\crc{\mu}}^{-1}(\bullet)$ no longer being commutative, in general. Having fixed $T,T'$ in a common fiber $\cK_{\crc{\mu}}^{-1}(x)$, $x\in \bF_{\mu}$, continuously deform the eigenvalues of both $T$ and $T'$ so that
    \begin{equation*}
      \lim\left(T|_{\cK_{\crc{\mu}_j}}\right)
      =
      \lambda_j \id|_{\cK_{\crc{\mu}_j}}
      =
      \lim\left(T'|_{\cK_{\crc{\mu}_j}}\right)
      ,\quad
      \forall j
    \end{equation*}
    for fixed $\lambda_j$ (this is possible, as the subsets $\crc{\mu}_j\subseteq [n]$ are contiguous). This has the effect of
    \begin{itemize}[wide]
    \item on the one hand, deforming $\cK_{((1)\cdots(n))}(\phi T,\phi T')$ continuously onto a common $\cK_{\crc{\mu}}\left(\lim T=\lim T'\right)$;

    \item while at the same time keeping those line tuples fixed, by \Cref{item:pr:f2f:pf.clmn} above. 
    \end{itemize}
    The conclusion that $\cK_{\crc{\mu}}(T,T')$ coincide follows. 
  \end{enumerate}
\end{proof}

We will accord some attention in the sequel to the issue of how and to what extent the $\widehat{\phi}$ of \Cref{pr:f2f} fail to be $S_n$-equivariant. In the sequel, the \emph{simple transpositions} in $S_n$ are those of the form $(j\ j+1)$. 

\begin{proposition}\label{pr:eqvr.triv}
  Assume the hypotheses of \Cref{pr:f2f}, and set $\wh{\phi}:=\wh{\phi}_{\left((1)\cdots(n)\right)}$.
  \begin{enumerate}[(1),wide]
  \item\label{item:pr:eqvr.triv:almost.eqvr} There is a self-map $(-)^{\circ}$ of $S_n$ so that $\wh{\phi}\circ\theta=\theta^{\circ}\circ \wh{\phi}$ for all $\theta\in S_n$. 

  \item\label{item:pr:eqvr.triv:trnsp} Furthermore, we have
    \begin{equation*}
      \forall\left(\text{simple transposition }\tau\in S_n\right)
      \left(\wh{\phi}\circ \tau\in \left\{\tau\circ \wh{\phi},\ \wh{\phi}\right\}\right).
    \end{equation*}
  \end{enumerate}
\end{proposition}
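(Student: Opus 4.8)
The plan is to realize both intertwining relations by feeding $\phi$ pairs of commuting simple operators that share a common eigenbasis but carry permuted eigenvalue labels, and then to pin down the resulting permutation of the output eigenlines by a connectedness (part~1) and a continuity-through-a-crossing (part~2) argument. Throughout I write $\cK:=\cK_{((1)\cdots(n))}$ for the ordered-eigenline map and use the intertwining $\cK\circ\phi=\wh{\phi}\circ\cK$ supplied by \Cref{pr:f2f}.

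For \Cref{item:pr:eqvr.triv:almost.eqvr}, note first that $\cK$ is surjective: any tuple $x=(\ell_i)_i\in\bF^{\bullet}(\Bbbk^n)$ is the ordered eigenline tuple of an operator whose $n$ distinct eigenvalues $\lambda_1<\cdots<\lambda_n$ we may choose freely along the (infinite) simple curve $\Lambda$. Fix such a $T$ with $\cK(T)=x$, and for $\theta\in S_n$ set $T_\theta:=\sum_k \lambda_{\theta k}\,P_{\ell_k}$, the operator with the same eigenlines but eigenvalue $\lambda_{\theta k}$ on $\ell_k$, so that $\cK(T_\theta)=\theta x$. Since $T$ and $T_\theta$ commute and are simple, C- and S-preservation force $\phi(T)$ and $\phi(T_\theta)$ to commute and to carry the same simple spectrum; being simultaneously diagonalizable with simple spectra, they possess the identical set of $n$ eigenlines. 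Hence $\wh{\phi}(x)=\cK(\phi(T))$ and $\wh{\phi}(\theta x)=\cK(\phi(T_\theta))$ are two orderings of one common line set, giving $\wh{\phi}(\theta x)=\theta^{\circ}_x\,\wh{\phi}(x)$ for a unique $\theta^{\circ}_x\in S_n$ (uniqueness by freeness of the $S_n$-action on $\bF^{\bullet}(\bC^n)$). The map $x\mapsto\theta^{\circ}_x$ into the discrete group $S_n$ is read off continuously from the pair $(\wh{\phi}(x),\wh{\phi}(\theta x))$, hence is locally constant, hence constant by connectedness of $\bF^{\bullet}(\Bbbk^n)$ (\Cref{res:fss}). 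Setting $\theta^{\circ}:=\theta^{\circ}_x$ produces the desired self-map; freeness moreover makes $(-)^{\circ}$ an endomorphism of $S_n$.

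For \Cref{item:pr:eqvr.triv:trnsp}, fix a simple transposition $\tau=(j\ j{+}1)$; by part~\Cref{item:pr:eqvr.triv:almost.eqvr} it suffices to evaluate $\tau^{\circ}$ on one convenient configuration. Take $T$ as above and connect it to $T_\tau$ by a path $T_t$, $t\in[0,1]$, that keeps every eigenline $\ell_i$ fixed and moves only the two eigenvalues on $\ell_j,\ell_{j+1}$ toward each other along $\Lambda$, crossing at a single parameter $t=1/2$ while staying strictly between the neighbors $\lambda_{j-1}$ and $\lambda_{j+2}$. Then $\cK(T_t)=x$ for $t<1/2$ and $\cK(T_t)=\tau x$ for $t>1/2$, so $\cK(\phi(T_t))=\wh{\phi}(x)=:(m_i)_i$ on the first arc and $=\tau^{\circ}(m_i)_i$ on the second. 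The crux is to show $\tau^{\circ}$ fixes every $i\notin\{j,j+1\}$: for such $i$ the eigenvalue $\lambda_i$ of $T_t$, hence of $\phi(T_t)$ by S-preservation, stays simple and isolated throughout a neighborhood of $t=1/2$, so its spectral projection $E_{\lambda_i}(\phi(T_t))$—expressed as a resolvent contour integral around a small circle separating $\lambda_i$ from the rest of $\sigma(\phi(T_t))=\sigma(T_t)$—is continuous and of constant rank one across $t=1/2$. Its range is the eigenline at ordered position $i$, namely $m_i$ for $t<1/2$ and $m_{(\tau^{\circ})^{-1}i}$ for $t>1/2$; continuity forces $m_i=m_{(\tau^{\circ})^{-1}i}$, whence $\tau^{\circ}(i)=i$. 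Thus $\tau^{\circ}$ is supported on $\{j,j+1\}$, i.e. $\tau^{\circ}\in\{\id,\tau\}$, which is the assertion.

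The main obstacle is precisely the crossing in part~\Cref{item:pr:eqvr.triv:trnsp}: pushing the two eigenvalues through a coincidence produces a momentarily non-simple operator at $t=1/2$ where $\cK$ (and $\wh{\phi}$) is undefined, so the conclusion cannot be obtained by evaluating $\wh{\phi}$ at the degenerate point and must instead be extracted from the projections attached to the \emph{unaffected} eigenvalues. The delicate point there is to confirm that the contour-integral projection $E_{\lambda_i}(\phi(T_t))$ really is continuous and keeps rank one across $t=1/2$; this holds because $\lambda_i$ remains an isolated simple point of the spectrum throughout, so no multiplicity bookkeeping at the crossing is required.
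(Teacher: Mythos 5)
Your proposal is correct and follows essentially the same route as the paper: part (1) via commuting simple operators sharing eigenlines with permuted eigenvalues, CS preservation, freeness of the $S_n$-action, and connectedness of $\bF^{\bullet}(\Bbbk^n)$; part (2) via a path swapping the two adjacent eigenvalues while fixing everything else. Your explicit justification of continuity across the degenerate crossing point via Riesz projections for the isolated eigenvalues is a detail the paper leaves implicit, but the argument is the same.
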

\begin{proof}
  \begin{enumerate}[label={},wide]    
  \item\textbf{\Cref{item:pr:eqvr.triv:almost.eqvr}} There is a left $S_n$-action on $\cH^{\bullet +}_{n\mid \Lambda}(\Bbbk)$ obtained by permuting eigenspaces, rendering the upper left-hand map of \Cref{eq:f2f} $S_n$-equivariant:
    \begin{equation*}
      \sum_{i=1}^n \lambda_i(T)E_i(T)
      =:
      T
      \xmapsto{\quad\theta\in S_n\quad}
      \theta T
      :=
      \sum_{i=1}^n \lambda_i(T)E_{\theta^{-1} i}(T).
    \end{equation*}
    For any $\theta$ and $T\in \cH^{\bullet +}_{n\mid \Lambda}(\Bbbk)$ the eigenvalues and eigenspaces of $\phi T$ and $\phi \theta T$ coincide by CS preservation, so that
    \begin{equation*}
      \forall\left(\theta\in S_n\right)
      \forall\left(T\in \cH^{\bullet +}_{n\mid \Lambda}(\Bbbk)\right)
      \exists\left(\theta^{\circ}(T)\in S_n\right)
      \left(
        \wh{\phi}\theta\left(\cK_i(T)\right)_i
        =
        \theta^{\circ}(T)\wh{\phi}\left(\cK_i(T)\right)_i
      \right).
    \end{equation*}
    This pushes through the upper left-hand map of \Cref{eq:f2f} to 
    \begin{equation*}
      \forall\left(\theta\in S_n\right)
      \forall\left(x\in \bF^{\bullet}(\Bbbk^n)\right)
      \exists\left(\theta^{\circ}(x)\in S_n\right)
      \left(
        \wh{\phi}\theta x
        =
        \theta^{\circ}(x)\wh{\phi}x
      \right).
    \end{equation*}
    The $x$-independence of $\theta^{\circ}(x)$ follows from the connectedness of $\bF^{\bullet}(\Bbbk^n)$ (\Cref{res:fss}\Cref{item:res:fss:fconn}), hence the conclusion.
    
  \item\textbf{\Cref{item:pr:eqvr.triv:trnsp}} Consider a simple transposition $\tau:=(j\ j+1)$, $1\le j\le n-1$. That the eigenspaces of $\phi T$ and $\phi \tau T$ can only differ in indices $j$ and $j+1$ (where they possibly may be interchanged) follows from the fact that $T$ and $\tau T$ can be connected by a continuous path leaving $\lambda_i(T)$, $i\not\in \{j,j+1\}$ in place and continuously deforming the pair $\left(\lambda_{j},\lambda_{j+1}\right)$ into its opposite $\left(\lambda_{j+1},\lambda_{j}\right)$.
  \end{enumerate}
\end{proof}

As a consequence of \Cref{pr:eqvr.triv}, we can address the issue raised above of $S_n$-equivariance in \Cref{eq:f2f}.

\begin{corollary}\label{cor:id.triv}
  Under the hypotheses of \Cref{pr:f2f} the map $\wh{\phi}:=\wh{\phi}_{\left((1)\cdots(n)\right)}$ either is $S_n$-equivariant or factors through $\bF^{\bullet}(\Bbbk^n)/S_n$.
\end{corollary}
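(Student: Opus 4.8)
The plan is to reduce everything to the self-map $(-)^{\circ}$ of $S_n$ supplied by \Cref{pr:eqvr.triv}\Cref{item:pr:eqvr.triv:almost.eqvr} and to show that it is forced to be either the identity endomorphism or the constant map at $e\in S_n$; these two alternatives translate precisely into $S_n$-equivariance and factorization through $\bF^{\bullet}(\Bbbk^n)/S_n$, respectively. The first step is to promote $(-)^{\circ}$ to a bona fide group homomorphism. Starting from the defining relation $\wh{\phi}\circ\theta=\theta^{\circ}\circ\wh{\phi}$, one computes for $\theta_1,\theta_2\in S_n$
\[
  (\theta_1\theta_2)^{\circ}\circ\wh{\phi}
  =\wh{\phi}\circ\theta_1\theta_2
  =\theta_1^{\circ}\circ\wh{\phi}\circ\theta_2
  =\theta_1^{\circ}\theta_2^{\circ}\circ\wh{\phi},
\]
so that $(\theta_1\theta_2)^{\circ}$ and $\theta_1^{\circ}\theta_2^{\circ}$ have the same effect on every point of the (nonempty) image of $\wh{\phi}$. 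Because the $S_n$-action on the codomain $\bF^{\bullet}(\bC^n)$ is free (\Cref{not:flg}), two permutations that agree on a single point of that image must coincide, so $(-)^{\circ}$ is an endomorphism of $S_n$.

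With that in hand, the next step invokes \Cref{pr:eqvr.triv}\Cref{item:pr:eqvr.triv:trnsp}: on each simple transposition $s_j=(j\ j+1)$ the homomorphism takes the value $s_j^{\circ}\in\{s_j,\,e\}$. I would feed these options into the braid relation $s_js_{j+1}s_j=s_{j+1}s_js_{j+1}$: applying $(-)^{\circ}$ and checking the four patterns for $(s_j^{\circ},s_{j+1}^{\circ})$, the mixed pattern $(s_j,e)$ gives $s_j^{\circ}s_{j+1}^{\circ}s_j^{\circ}=e$ but $s_{j+1}^{\circ}s_j^{\circ}s_{j+1}^{\circ}=s_j$, forcing the contradiction $e=s_j$, and symmetrically $(e,s_{j+1})$ forces $s_{j+1}=e$. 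Only the ``pure'' patterns $(s_j,s_{j+1})$ and $(e,e)$ survive. The commutation relations $s_is_j=s_js_i$ for $|i-j|\ge 2$ impose nothing, since $e$ commutes with every $s_i$. Hence adjacent simple generators are constrained to behave alike.

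Finally, since the Coxeter diagram of $S_n$ is the connected path $s_1-s_2-\cdots-s_{n-1}$ of type $A_{n-1}$, the ``behave-alike'' constraint propagates along the whole chain: either $s_j^{\circ}=s_j$ for every $j$, in which case $(-)^{\circ}=\id$ on a generating set and hence everywhere, giving $S_n$-equivariance of $\wh{\phi}$; or $s_j^{\circ}=e$ for every $j$, in which case $(-)^{\circ}$ is trivial, $\wh{\phi}\circ\theta=\wh{\phi}$ for all $\theta\in S_n$, and $\wh{\phi}$ descends to $\bF^{\bullet}(\Bbbk^n)/S_n$. The degenerate cases $n\le 2$ (where no braid relation is available) are handled directly, the single generator $s_1$ already exhibiting the dichotomy. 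I expect the only genuinely substantive point to be the homomorphism property established in the first paragraph, whose crux is exactly the freeness of the symmetric-group action recorded in \Cref{not:flg}; the remainder is a short Coxeter-combinatorial verification.
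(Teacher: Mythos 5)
Your proof is correct and follows the same overall skeleton as the paper's: first promote $(-)^{\circ}$ to an endomorphism of $S_n$ via freeness of the action on the codomain (your composition computation is exactly the content of the paper's one-line appeal to freeness), then use the constraint $\tau^{\circ}\in\{\tau,e\}$ on simple transpositions from \Cref{pr:eqvr.triv}\Cref{item:pr:eqvr.triv:trnsp} to force the dichotomy. Where you diverge is in the final step: the paper invokes the global group-theoretic fact that a homomorphism out of $S_n$ annihilating a single transposition is trivial (the normal closure of any transposition is all of $S_n$, since transpositions form a conjugacy class and generate), whereas you propagate the constraint locally along the type-$A_{n-1}$ Coxeter diagram using the braid relations, ruling out the mixed patterns on adjacent generators. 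Both are valid; the paper's version is a one-liner, while yours is more self-contained (it uses only the presentation) and makes visible exactly which relations do the work --- though note that your braid-relation trick relies on the edge labels being odd, so the paper's conjugacy argument is the one that would survive in settings where simple reflections need not all be conjugate. Your separate handling of $n\le 2$ is a nice touch the paper leaves implicit.
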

\begin{proof}
  The $S_n$-actions on the (co)domain of $\wh{\phi}$ being free, the map $(-)^{\circ}$ of \Cref{pr:eqvr.triv}\Cref{item:pr:eqvr.triv:almost.eqvr} is necessarily an endomorphism of $S_n$. Since morphisms defined on $S_n$ are trivial as soon as they annihilate at least one transposition, \Cref{pr:eqvr.triv}\Cref{item:pr:eqvr.triv:trnsp} implies that $(-)^{\circ}$, if non-trivial, must be the identity. The two options $(-)^{\circ}\in \left\{\id,1\right\}$ precisely correspond to the two possibilities listed in the present statement. 
\end{proof}

The dichotomy of \Cref{cor:id.triv} is suggestive of \Cref{th:gen.herm}'s; \Cref{pr:n3.if.ct} confirms that intuition, handling one branch. 

\begin{proposition}\label{pr:n3.if.ct}
  Let $n\in \bZ_{\ge 3}$ and $\cH_{n\mid \Lambda}\xrightarrow{\phi}\cM_n(\bC)$ a continuous CS preserver for a simple curve $\Lambda\subseteq \Bbbk\in \left\{\bR,\bC\right\}$.

  If the map $\wh{\phi}:=\wh{\phi}_{((1)\cdots(n))}$ factors through $\bF^{ss,*}(\Bbbk^n)/S_n$ then it is constant. 
\end{proposition}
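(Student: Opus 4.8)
The plan is to exploit the $S_n$-invariance of $\wh\phi$ — which is exactly what factoring through $\bF^{ss,*}(\Bbbk^n)/S_n$ means — one output slot at a time. For each index $i\in[n]$ and each line $\ell\subseteq\Bbbk^n$ I will isolate the $i$-th coordinate of $\wh\phi$ on those flags whose $i$-th entry is $\ell$, show this coordinate depends on $\ell$ alone and not on the remaining $n-1$ lines of the flag, and then feed invariance back in to collapse the dependence on $\ell$ as well. The conclusion will be that every coordinate of $\wh\phi$ is a single fixed line, so $\wh\phi$ is constant; note that the $n$ coordinate-constants need not coincide, matching the fixed (but reordered) eigenspaces of a type-\Cref{item:th:gen.herm:fix} map.

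The main step is local constancy. Fixing $i$ and $\ell$, regard the slot-$i$ output as a function on the space $\bF^{\bullet}(\ell^{\perp})$ of configurations of the other $n-1$ lines (the $i$-th entry being pinned at $\ell$), which is connected by \Cref{res:fss}\Cref{item:res:fss:fconn}. Given two further slots $j,k\neq i$ spanning a plane $P=P_{jk}$ inside $\ell^{\perp}$, introduce the bridging operator $S$ having $\ell$ and the remaining lines as simple eigenlines and $P$ as a single doubled eigenspace. Every operator diagonal in any flag obtained by rotating the lines in slots $j,k$ within $P$ has eigenlines refining the eigenspaces of $S$, hence commutes with $S$; by CS preservation their images commute with the single operator $\phi(S)$, forcing $\phi(S)$ to be diagonal in the corresponding output flag $\wh\phi(\,\cdot\,)$. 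Since $\lambda_i$ is a simple eigenvalue of $S$, the slot-$i$ output is pinned to the $\lambda_i$-eigenline of $\phi(S)$, a line that does not move as the pair $(a,b)$ sweeps $\bF^{\bullet}(P)$. This is the one place $n\ge 3$ is essential: one needs two spectator slots $j,k$ to build a rotatable plane disjoint from slot $i$. Such elementary rotations generate $\bF^{\bullet}(\ell^{\perp})$, so the slot-$i$ output is constant there, defining a line $m_i(\ell)$ depending on $\ell$ only; continuity of $\wh\phi$ makes each $m_i\colon\bP(\Bbbk^n)\to\bP(\bC^n)$ continuous.

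Now I would reintroduce invariance. For any flag $x=(\ell_1,\dots,\ell_n)$ and any $\theta\in S_n$, comparing $\wh\phi(\theta x)=\wh\phi(x)$ in slot $i$ yields $m_i(\ell_{\theta^{-1}i})=m_i(\ell_i)$; taking $\theta=(i\ j)$ shows $m_i$ is constant along the line-set of every single flag. For $n\ge 3$ any two lines can be linked through a third line forming a valid $\bF^{\bullet}$-tuple with each (in the normal case, a common orthogonal complement line exists precisely because $n\ge 3$), so this common value propagates across all of $\bP(\Bbbk^n)$ and each $m_i\equiv c_i$ is a single line. Assembling slots gives $\wh\phi(x)=(c_1,\dots,c_n)$ independently of $x$, as claimed.

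I expect the well-definedness of $m_i$ in the second paragraph to be the crux: it is the only place that genuinely uses commutativity preservation — through the bridging operators $S$ together with the continuity of $\phi$ — and the only place where $n\ge 3$ is indispensable. For $n=2$ there is no spectator plane to rotate and the slot-$i$ output really can vary, consistent with the statement requiring $n\ge 3$. The remaining bookkeeping — matching the simple eigenvalue $\lambda_i$ of $S$ to the correct output slot by a degeneration argument as two eigenvalues of a diagonal operator merge into that of $S$, and verifying that coordinate-plane rotations connect $\bF^{\bullet}(\ell^{\perp})$ — is routine.
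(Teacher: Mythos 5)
Your argument is correct and is essentially the paper's: both hinge on degenerating to an operator with a doubled eigenvalue on the $2$-plane spanned by a rotating pair of lines in order to pin the spectator eigenlines (your bridging operator $S$ is exactly the paper's limit operator with $\cK_{((12)(3)\cdots(n))}(S)=(\ell_1+\ell_2,\ell_3,\dots,\ell_n)$), followed by $S_n$-invariance together with $n\ge 3$ and chain-connectivity of $\bF^{\bullet}$ under such pair rotations. The one detail to watch in your "routine bookkeeping" is that merging the two eigenvalues requires the pair $j,k$ to be adjacent in the curve ordering of $\Lambda$ (non-adjacent eigenvalues cannot collide without crossing the intermediate ones, which would break the degeneration), so for a non-adjacent pair you should first relabel using the $S_n$-invariance you already have by hypothesis --- precisely as the paper does by always working with the contiguous slots $1,2$.
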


Pausing first for the immediate consequence:

\begin{corollary}\label{cor:n3.if.ct}
  In the context of \Cref{th:gen.herm}, if $\wh{\phi}_{((1)\cdots(n))}$ factors through $\bF^{ss,*}(\Bbbk^n)/S_n$ then $\phi$ is of type \Cref{item:th:gen.herm:fix}.  \qedhere  
\end{corollary}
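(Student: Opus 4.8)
The plan is to apply \Cref{pr:n3.if.ct} verbatim and then translate the resulting constancy of $\wh\phi$ into the explicit shape of $\phi$. We are in the setting of \Cref{th:gen.herm}, so $\bullet=*$, $n\ge 3$, and $\phi\colon\cH^*_{n\mid\gamma}\to\cM_n(\bC)$ is a continuous CS preserver; the standing hypothesis that $\wh\phi:=\wh\phi_{((1)\cdots(n))}$ factors through $\bF^*(\Bbbk^n)/S_n$ is exactly the input required by \Cref{pr:n3.if.ct}. I would therefore conclude at once that $\wh\phi$ is constant, say $\wh\phi\equiv x_0=(\ell_1,\ldots,\ell_n)\in\bF^*(\bC^n)$.

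The next step is to read off what constancy of $\wh\phi$ says about $\phi$ on the simple locus. Commutativity of \Cref{eq:f2f} for $\mu=(11\cdots1)$ gives, for every simple $T\in\cH^{*+}_{n\mid\Lambda}$,
\[
  \left(\cK_i(\phi T)\right)_i
  =
  \wh\phi\left(\left(\cK_i(T)\right)_i\right)
  =
  x_0,
\]
so the ordered eigenspaces of $\phi T$ are the fixed mutually orthogonal lines $\ell_1,\ldots,\ell_n$, independent of $T$. Spectrum preservation yields $\sigma(\phi T)=\sigma(T)$, and ordering both spectra by the chosen orientation of $\Lambda$ gives $\lambda_i(\phi T)=\lambda_i(T)$; pairing the $i^{th}$ eigenvalue with the $i^{th}$ eigenspace then produces
\[
  \phi T
  =
  \sum_{i=1}^n\lambda_i(T)\,P_{\ell_i}
  =
  \Ad_{T_0}\diag\left(\lambda_1(T),\ldots,\lambda_n(T)\right),
\]
where $P_{\ell_i}$ is the projection onto $\ell_i$ and $T_0\in\GL(n,\bC)$ is a fixed matrix (unitary, since the $\ell_i$ are orthogonal) carrying the standard line $\spn(e_i)$ to $\ell_i$. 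This is precisely the type-\Cref{item:th:gen.herm:fix} expression with $T:=T_0$, valid on $\cH^{*+}_{n\mid\Lambda}$.

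It remains to upgrade this identity from the simple locus to all of $\cH^*_{n\mid\gamma}$. Both $\phi$ and the candidate type-\Cref{item:th:gen.herm:fix} map are continuous (the latter by continuity of \Cref{eq:h2f}), so I would finish by invoking density of the simple operators: an arbitrary normal $T$ with $\Lambda$-spectrum is approximated by splitting each repeated eigenspace into lines and separating the corresponding eigenvalues inside $\Lambda$ (possible as $\Lambda$, being a nondegenerate simple curve, is infinite), yielding simple approximants converging to $T$. Agreement on a dense set forces global agreement, so $\phi$ is of type \Cref{item:th:gen.herm:fix}. The only delicate point is this density-and-continuity extension and the compatibility of the orientation-ordering under perturbation; the substantive work is entirely discharged by \Cref{pr:n3.if.ct}, which the corollary merely repackages.
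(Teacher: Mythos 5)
Your proposal is correct and follows exactly the route the paper intends: the paper treats \Cref{cor:n3.if.ct} as an immediate consequence of \Cref{pr:n3.if.ct} (hence the \verb|\qedhere| in the statement itself), and your write-up simply fills in the routine unpacking — constancy of $\wh{\phi}$ plus spectrum preservation gives the type-\Cref{item:th:gen.herm:fix} formula on the simple locus, and density/continuity extends it. The only cosmetic remark is that the unitarity of $T_0$ is irrelevant to the conclusion (type \Cref{item:th:gen.herm:fix} only asks for $T_0\in\GL(n,\bC)$), so nothing hinges on whether the constant frame is orthogonal.
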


\pf{pr:n3.if.ct}
\begin{pr:n3.if.ct}
  Consider $x=\left(\ell_i\right)_{i=1}^n\in \bF:=\bF^{ss,*}(\Bbbk^n)$, as well as a perturbation
  \begin{equation*}
    x':=\left(\ell'_i\right)_{i=1}^n
    \in \bF
    ,\quad
    \forall\left(j\ne 1\right)\left(\ell'_j=\ell_j\right)
    ,\quad
    \ell'_1+\ell'_2=\ell_1+\ell_2
  \end{equation*}
  thereof. Operators
  \begin{equation*}
    T,T'\in \cH^{+}
    ,\quad
    \cK_{(1)\cdots(n)}(T,T')
    \xlongequal{\quad\text{respectively}\quad}
    x,x'
  \end{equation*}
  admit continuous deformations
  \begin{equation*}
    T_t
    \xrightarrow[\quad [0,1)\ni t\to 1\quad]{\quad\quad}
    S
    \xleftarrow[\quad 1\leftarrow t\in [0,1)\quad]{\quad\quad}
    T'_t
    ,\quad
    \left[
      \begin{gathered}
        T^{\bullet}=T^{\bullet}_0\\
        \cK_{((1)\cdots(n))}(T_t^{\bullet})=x^{\bullet}\\
        \cK_{((12)(3)\cdots(n))}(S)=\left(\ell_1+\ell_2,\ell_3\cdots \ell_n\right)
      \end{gathered}
    \right.
  \end{equation*}
  altering only the $\lambda_{1,2}$ eigenvalues. Said deformations will keep $\wh{\phi}(x,x')$ constant, meaning that
  \begin{equation*}
    \forall\left(j\ge 3\right)\left(\wh{\phi}(x)_j=\wh{\phi}(x')_{j}\right)
    \quad\text{and}\quad
    \wh{\phi}(x)_1+\wh{\phi}(x)_2
    =
    \wh{\phi}(x')_1+\wh{\phi}(x')_2.
  \end{equation*}
  By the assumed $S_n$-invariance and the assumption that $n\ge 3$, however, this also gives $\wh{\phi}(x)_j=\wh{\phi}(x')_j$ for \emph{all} $j\in [n]$. The conclusion follows by noting that any two $x,x'\in \bF$ can be connected by a chain
  \begin{equation*}
    x
    =
    x_0
    ,\ x_1
    ,\ \cdots
    ,\ x_s
    =
    x'
  \end{equation*}
  with consecutive $x_{p,p+1}$
  \begin{itemize}[wide]
  \item differing in only one component $x_{p,j}\ne x_{p+1,j}$ for some $j\in [n]$;
  \item so that the 2-planes $x_{p,j}+x_{p,j'}$ and $x_{p+1,j}+x_{p+1,j'}$ coincide for some $j'\ne j$.  \qedhere
  \end{itemize}
\end{pr:n3.if.ct}

\Cref{cor:n3.if.ct} turns the focus on the yet-to-be-examined option in \Cref{cor:id.triv}.

\begin{lemma}\label{le:max.ab}
  Let $n\in \bZ_{\ge 1}$ and $\cH_{n\mid \Lambda}\xrightarrow{\phi}\cM_n(\bC)$ a continuous CS preserver for a simple curve $\Lambda\subseteq \Bbbk\in \left\{\bR,\bC\right\}$.

  If the map $\wh{\phi}:=\wh{\phi}_{((1)\cdots(n))}$ is $S_n$-equivariant then $\phi$ restricts to a conjugation on every maximal abelian subset of $\cH_{n\mid\Lambda}$. 
\end{lemma}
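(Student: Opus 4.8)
The plan is to pin down a maximal abelian subset $\cA\subseteq \cH_{n\mid\Lambda}$ concretely and then read off the conjugating matrix directly from the value of $\wh\phi$ at a single flag. A maximal abelian $\cA$ consists of all operators diagonalizable along one fixed line decomposition $\Bbbk^n=\ell_1\oplus\cdots\oplus\ell_n$ (orthogonal when $\bullet=*$) with spectra in $\Lambda$; any operator commuting with a simple member of $\cA$ must preserve those eigenlines, which is what pins down maximality. Choosing an eigenvalue in $\Lambda$ for each line identifies $\cA\cong\Lambda^n$. The first step is to record that the simple operators $\cA^+$ (those with distinct eigenvalues, in the sense of \Cref{re:+not}) are dense in $\cA$: since $\Lambda$ is a non-degenerate simple curve it has no isolated points, so the ``collision'' loci where two chosen eigenvalues agree form a nowhere-dense closed subset of $\Lambda^n$. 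As $\phi$ and any conjugation are continuous and $\cM_n(\bC)$ is Hausdorff, it then suffices to produce a single $T_0\in\GL(n,\bC)$ with $\phi=\Ad_{T_0}$ on $\cA^+$.

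Next I would set $x:=(\ell_1,\dots,\ell_n)\in\bF^\bullet(\Bbbk^n)$ and $y:=\wh\phi(x)=(m_1,\dots,m_n)\in\bF^\bullet(\bC^n)$. Because $y$ is again a flag, the lines $m_i$ are linearly independent and span $\bC^n$, so one may pick $T_0\in\GL(n,\bC)$ with $T_0\ell_i=m_i$ for every $i$; then $\Ad_{T_0}E_{\ell_i}=E_{m_i}$, the two being spectral projections along the respective decompositions $\bigoplus\ell_j$ and $\bigoplus m_j$. The heart of the matter is the bookkeeping converting $S_n$-equivariance into this one conjugation. For $T=\sum_i\mu_iE_{\ell_i}\in\cA^+$ let $i_1,\dots,i_n$ order the eigenvalues increasingly along $\Lambda$, so that $\cK_{((1)\cdots(n))}(T)=(\ell_{i_1},\dots,\ell_{i_n})=\theta x$ for the appropriate $\theta\in S_n$; as $T$ ranges over $\cA^+$ this realizes the whole orbit $S_n\cdot x$. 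Equivariance $\wh\phi\circ\theta=\theta\circ\wh\phi$ then gives $\wh\phi(\theta x)=\theta y=(m_{i_1},\dots,m_{i_n})$, which says precisely that $\phi(T)$ carries eigenvalue $\mu_i$ on the line $m_i$ for each $i$, independently of $T$. Hence $\phi(T)=\sum_i\mu_iE_{m_i}=\Ad_{T_0}(T)$.

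Combining the two steps yields $\phi=\Ad_{T_0}$ on $\cA^+$, and density together with continuity extends the equality to all of $\cA$. The only genuinely delicate point I anticipate is keeping the permutation conventions straight: the $S_n$-action of \Cref{not:flg} is $\theta x=(\ell_{\theta^{-1}i})_i$, and one must verify that the induced identification $\cK_{((1)\cdots(n))}(\cA^+)=S_n\cdot x$ feeds through the compatibility $\wh\phi\circ\theta=\theta\circ\wh\phi$ to the clean, $T$-independent statement ``$\phi(T)$ has eigenvalue $\mu_i$ on $m_i$''. Everything else — the existence of $T_0$, the projection identity $\Ad_{T_0}E_{\ell_i}=E_{m_i}$, and the density of $\cA^+$ in $\cA$ — is routine.
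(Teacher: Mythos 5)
Your proposal is correct and follows essentially the same route as the paper: identify the maximal abelian subsets as the (closures of the) sets of operators diagonalizable along a fixed frame $x$, use CS preservation to pin down $\phi$ on each fiber $\cK_{((1)\cdots(n))}^{-1}(\theta x)$, and use the $S_n$-equivariance of $\wh\phi$ to make the resulting assignments $\ell_i\mapsto m_i$ consistent across all $n!$ orderings, concluding by density of the simple operators and continuity. Your write-up merely makes explicit the bookkeeping that the paper compresses into one sentence.
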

\begin{proof}
  Said maximal abelian subsets are precisely the
  \begin{equation*}
    \overline{
      \bigsqcup_{\substack{\crc{\mu}\\\mu:=(11\cdots 1)}}
      \cK_{\crc{\mu}}^{-1}(x)
    }
    \subseteq
    \cH_{n\mid \Lambda}
    ,\quad
    x\in \bF^{\bullet}(\Bbbk^n)
  \end{equation*}
  ($\overline{(-)}$ denoting closure), and the conclusion is immediate from CS preservation (which delivers that conclusion for a single tableau $\crc{\mu}$) coupled with the assumed $S_n$-equivariance (which ensures compatibility among the $n!$ tableaux $\crc{\mu}$).
\end{proof}

\pf{th:gen.herm}
\begin{th:gen.herm}
  The domain of $\phi$ consists of normal operators, consequently with mutually-orthogonal eigenspaces. The map $\wh{\phi}:=\wh{\phi}_{((1)\cdots(n))}$ introduced in \Cref{pr:f2f} is henceforth assumed $S_n$-equivariant, as afforded by \Cref{cor:n3.if.ct}. That result having disposed of the \Cref{item:th:gen.herm:fix} side of the present theorem, the goal is to argue that $\phi$ is of type \Cref{item:th:gen.herm:cj}.
  
  We will construct a continuous, dimension- and inclusion-preserving map $\bG(\Bbbk^n)\xrightarrow{\Phi=\Phi_{\phi}}\bG(\bC^n)$ that recovers $\wh{\phi}$ in the sense that 
  \begin{equation*}
    \wh{\phi}\left(\ell_1,\ \cdots,\ \ell_n\right)
    =
    \left(\Phi\ell_1,\ \cdots,\ \Phi\ell_n\right).
  \end{equation*}
  $\Phi$ will furthermore respect lattice operations for pairs of spaces whose respective orthogonal projections commute; or: writing
  \begin{equation*}
    \cV\operp \cW
    \quad
    \text{for}
    \quad
    \left(\cV\cap \cW\right)
    \perp
    \left(\cV\ominus \left(\cV\cap \cW\right)\right)
    ,\
    \left(\cW\ominus \left(\cV\cap \cW\right)\right)
  \end{equation*}
  (with `$\ominus$' denoting the orthogonal complement of its right-hand side in the left),
  \begin{equation}\label{eq:if.operp}
    \cV
    \operp
    \cW
    \quad
    \xRightarrow{\quad}
    \quad
    \Phi\left(\cV\vee\cW\right)=\Phi\cV\vee\Phi\cW
    \quad\text{and}\quad
    \Phi\left(\cV\wedge\cW\right)=\Phi\cV\wedge\Phi\cW.
  \end{equation}
  
  We begin by defining the individual components $\Phi_d:=\Phi|_{\bG(d,\Bbbk^n)}$, $d\in [n]$ by
  \begin{equation}\label{eq:phid.def}
    \bG(d,\Bbbk^n)
    \ni
    \cK_{[d]}(T)
    \xmapsto[\quad T\in \cH^{*+}_{n\mid \Lambda}\quad]{\quad\Phi_d\quad}
    \cK_{[d]}(\phi T)
    \in
    \bG(d,\bC^n)
  \end{equation}
  for $\cK_{\bullet}$ as in \Cref{not:multset.eigspcs}\Cref{item:not:multset.eigspcs:ks}. \emph{Were} the definition consistent, continuity, dimension preservation and inclusion preservation would be routine.
  
  \begin{enumerate}[(I),wide]
  \item\textbf{: The $\Phi_d$ are well-defined.} This is a variant of the argument employed in \cite[Lemma 1.10]{2505.19393v3}, say: having fixed $a<b\in \Lambda$, note that all $T$ candidates for \Cref{eq:phid.def} commute with the operator $T_W$ with eigenvalue $a$ along $W:=\cK_{[d]}(T)$ and $b$ along $W^{\perp}$. There are continuous curves
    \begin{equation*}
      \left(T_t\right)_{t\in [0,1]}
      ,\quad
      T_0=T
      ,\quad
      T_1=T_W
      ,\quad
      \forall(t\in[0,1))\left(T_t\in \cH^{*+}\right)
    \end{equation*}
    preserving eigenspaces, whence $\cK_{[d]}(T)=\cK_{[d]}(T_W)$. The latter of course depends on $W$ only, and the consistency of \Cref{eq:phid.def} follows.

  \item\textbf{: \Cref{eq:if.operp} holds.} Observe first that the $S_n$-equivariance of $\wh{\phi}$ allows us to recast \Cref{eq:phid.def} as
    \begin{equation*}
      \bG(d,\Bbbk^n)
      \ni
      \cK_{S}(T)
      \xmapsto[\quad T\in \cH^{*+}_{n\mid \Lambda}\quad]{\quad\Phi_d\quad}
      \cK_{S}(\phi T)
      \in
      \bG(d,\bC^n)
    \end{equation*}
    for \emph{any} $d$-sized $S\subseteq [n]$. \Cref{eq:if.operp} will now follow; for suprema, for instance (i.e. sums; intersections are handled similarly) observe that whenever
    \begin{equation*}
      \cV\operp \cV'
      ,\quad
      d:=\dim \cV
      ,\quad
      d':=\dim \cV'
      ,\quad
      d_0:=\dim \cV\cap \cV'
    \end{equation*}
    we can select subsets $S,S'\subseteq [n]$ of respective cardinalities $d,d'$ with $d_0$-sized intersection $S_0:=S\cap S'$ and
    \begin{equation*}
      T\in \cH^{*+}_{n\mid \Lambda}
      ,\quad
      \cK_S(T)=\cV
      ,\quad
      \cK_{S'}(T)=\cV'
      ,\quad
      \cK_{S_0}(T)=\cV\cap \cV',
    \end{equation*}
    yielding
    \begin{equation*}
      \Psi_{d+d'-d_0}\left(\cV+\cV'=\cK_{S\cup S'}(T)\right)
      =
      \cK_{S\cup S'}(\phi T)
      =
      \cK_{S}(\phi T)
      +
      \cK_{S'}(\phi T)
      =
      \Psi_d \cV
      +
      \Psi_{d'}\cV'.
    \end{equation*}
    
  \item\textbf{: Conclusion.} $\Phi$ in hand, we can proceed as in the proof of \cite[Theorem 2.1, unitary portion]{2501.06840v2} (which strategy the present argument adapts). \cite[Proposition 2.5]{2501.06840v2} ensures\footnote{That result assumes the domain is again the \emph{complex} Grassmannian, but the argument goes through for $\Phi$ defined on $\bG(\bR^n)$ instead.} the existence of a linear or conjugate-linear 
    \begin{equation*}
      \Bbbk^n
      \lhook\joinrel\xrightarrow[\quad]{\quad J\quad}
      \bC^n
      ,\quad
      \forall\left(\cV\in \bG(\Bbbk^n)\right)
      \left(
        \Phi \cV=J\cV
      \right).
    \end{equation*}
    $J$ thus maps $\lambda$-eigenspaces of $T\in \cH^*_{n\mid \Lambda}$ respectively onto $\lambda$-eigenspaces of $\phi T$, so $\phi$ is either
    \begin{itemize}[wide]
    \item conjugation by $J$ if the latter is linear;

    \item or
      \begin{equation*}
        \Ad_{J}(-)^*
        =
        \Ad_{JJ'}()^t
      \end{equation*}
      if $J$ is conjugate-linear, with $J'$ denoting standard complex conjugation on $\bC^n$ (with respect to the basis assumed fixed in denoting that space by `$\bC^n$' to begin with).  \qedhere
    \end{itemize}
  \end{enumerate}
\end{th:gen.herm}

\pf{th:scl}
\begin{th:scl}
  \Cref{th:gen.herm} ensures that every restriction
  \begin{equation*}
    \phi|_{\cH^*_{n\mid \Lambda_p}}
    ,\quad
    p\in \Lambda
    ,\quad
    \Lambda_p:=\Lambda\setminus \{p\}
  \end{equation*}
  is either of type \Cref{item:th:gen.herm:cj} or \Cref{item:th:gen.herm:fix}. Continuity implies type coherence for varying $p\in \Lambda$, so it will be enough to rule out the type-\Cref{item:th:gen.herm:fix} possibility.

  Assume for a contradiction that
  \begin{equation*}
    \forall\left(p\in\Lambda\right)
    \left(
      \cH^*_{n\mid \Lambda_p}
      \ni
      X
      \xmapsto{\quad\phi\quad}
      \mathrm{diag}\left(\lambda_1(X)\cdots\lambda_n(X)\right)
    \right)    
  \end{equation*}
  for the counterclockwise ordering along $\Lambda$ (so also along each $\Lambda_p$). The inconsistency is plain: for a simple operator $T\in \cH^{*+}_{n\mid\Lambda_p}$ the counterclockwise spectrum orderings will differ depending on which consecutive spectrum elements $p$ separates. 
\end{th:scl}

\pf{th:if.l.reg}
\begin{th:if.l.reg}
  \begin{enumerate}[label={},wide]
  \item\textbf{\Cref{item:th:if.l.reg:is.cs}} CS preservers form a monoid under composition, and we have already observed repeatedly that maps of the form \Cref{item:th:gen.herm:cj,item:th:gen.herm:fix} will do. That $(-)^{\rho}$ is well-defined is a consequence of \cite[Lemma 6.2]{zbMATH06285212}; the proof of \cite[Proposition 2.14]{2501.06840v2} argues this as well in passing, along with commutativity preservation as a consequence of the \emph{Putnam-Fuglede theorem} \cite[p.376, second statement]{zbMATH03133061}.

  \item\textbf{\Cref{item:th:if.l.reg:ddb} and \Cref{item:th:if.l.reg:ddc}} Noting that
    \begin{equation*}
      \begin{tikzpicture}[>=stealth,auto,baseline=(current  bounding  box.center)]
        \path[anchor=base] 
        (0,0) node (l) {$\cM^{ss}_{n}$}
        +(2,.5) node (u) {$\cM^{ss}_n$}
        +(4,0) node (r) {$\cM^{ss}_n$}
        ;
        \draw[->] (l) to[bend left=6] node[pos=.5,auto] {$\scriptstyle (-)^{\rho}$} (u);
        \draw[->] (u) to[bend left=6] node[pos=.5,auto] {$\scriptstyle (-)^*$} (r);
        \draw[->] (l) to[bend right=6] node[pos=.5,auto,swap] {$\scriptstyle \Ad_L N\xmapsto{\quad} \Ad_L N^*$} (r);
      \end{tikzpicture}
    \end{equation*}
    is precisely the self-map (of the space $\cM_n^{ss}$ of semisimple operators) applying complex conjugation $\overline{\bullet}$ to every eigenvalue and preserving the respective eigenspaces, the two statements are precisely what \cite[Theorem 4.3 (ii) $\Leftrightarrow$ (iv)]{zbMATH06285212}  and \cite[Proposition 4.5 (i) $\Leftrightarrow$ (iii)]{zbMATH06285212} respectively provide.  \qedhere
  \end{enumerate}
\end{th:if.l.reg}


\addcontentsline{toc}{section}{References}

\def\polhk#1{\setbox0=\hbox{#1}{\ooalign{\hidewidth
  \lower1.5ex\hbox{`}\hidewidth\crcr\unhbox0}}}


\Addresses

\end{document}